\newtheorem{lemma}{Lemma}
\newtheorem{theorem}{Theorem}
\newtheorem*{theorem*}{Theorem}
\newtheorem*{lemma*}{Lemma}
\newtheorem*{prop*}{Proposition}
\newtheorem*{corollary*}{Corollary}
\newtheorem*{remark*}{Remark} 
\newtheorem*{remarks*}{Remarks}
\newtheorem*{conj*}{Conjecture}
\def\N{{\mathbb N}}
\def\S{\mathbb{S}}
\def\R{{\mathbb R}}
\def\B{{\mathbb B}}
\def\e{\varepsilon}
\def\s{\sigma}
\DeclareFontFamily{U}{mathx}{\hyphenchar\font45}
\DeclareFontShape{U}{mathx}{m}{n}{
	<5> <6> <7> <8> <9> <10>
	<10.95> <12> <14.4> <17.28> <20.74> <24.88>
	mathx10
}{}
\begin{document}

 \title[The Multilinear Spherical Maximal Function in one dimension]{The Multilinear Spherical Maximal Function in one dimension}
\author[Georgios Dosidis and Jo\~{a}o P.G. Ramos]{Georgios Dosidis and Jo\~{a}o P.G. Ramos}

\newcommand{\Addresses}{{
		\bigskip
		\footnotesize
		
		\textsc{Georgios Dosidis.}
		\textsc{Department of Mathematics and Physics, Charles University,
			Prague, Czechia}\par\nopagebreak
		\textit{E-mail address:} \texttt{Dosidis@karlin.mff.cuni.cz}
		
		\medskip
		\textsc{Jo\~{a}o P.G. Ramos.}
		\textsc{Department of Mathematics, ETH Z\"urich,
			Z\"urich, Switzerland.}\par\nopagebreak
		\textit{E-mail address:} \texttt{Joao.ramos@math.ethz.ch}

}}
\thanks{The first author was supported by the Primus research programme PRIMUS/21/SCI/002 of Charles University. The second author was supported by the European Research Council under the Grant Agreement No. 721675 ``Regularity and Stability in Partial Differential Equations (RSPDE)''}

\begin{abstract} In dimension $n=1$ we obtain $L^{p_1}(\mathbb R) \times\dots\times L^{p_m}(\mathbb R)$ to $L^p(\mathbb R)$ boundedness for the multilinear spherical maximal function in the largest possible open set of indices  and we provide counterexamples  that  indicate the optimality of our results. 
\end{abstract}
\maketitle

\section*{Introduction}
The Lebesgue differentiation theorem states that, for a function $f \in L^1_{loc}(\R^d),$ there is a null set $E \subset \R^d$ so that, if $x \in \R^d \setminus E,$ then 
\[
\lim_{r \to 0} \frac{1}{|B_r(0)|} \int_{B_r(x)} f(y) \, dy = f(x).
\]
A natural question, in that regard, is whether the same convergence holds if one replaces averages over balls by averages over \emph{spheres}. In addition, the study of such spherical averages is deeply connected with the study of dimension-free bounds for the Hardy--Littlewood maximal function, as highlighted by Stein \cite{S1982}. 

In this direction, such a theorem on spherical averages induces the study of the \emph{spherical maximal function} defined by
\begin{equation}\label{SS}
	S (f) (x) := \sup_{t>0}\left| \int_{\mathbb S^{n-1}}f(x-t y) d\s_{n-1}(y)\right|.
\end{equation}
The study of bounds for the spherical  maximal function was initiated by Stein  \cite{S1976}, who obtained its boundedness from $L^p(\mathbb R^n) \to L^p(\mathbb R^n)$ when $n\geq 3$ and $p>\frac{n}{n-1}$ and showed that it is unbounded when $p\leq\frac{n}{n-1}$ and $n\geq 2$. The analogue of this result  in dimension $n=2$ was established later by Bourgain in \cite{B1986}, who also obtained a restricted weak type estimate in \cite{B1985} in the case $n\geq 3$. 

Further developments have been obtained by Seeger, Tao, and Wright, which, in \cite{STW2003}, proved that the restricted weak type estimate does not hold in dimension $n=2$. A number of other authors have also studied the spherical maximal function, among which we highlight  \cite{CM1979}, \cite{C1985}, \cite{R1986}, \cite{MSS1992}, \cite{S1998}, \cite{BOS2009} and the references therein. Extensions of the spherical maximal function to different settings have also been established by several authors; for instance, see \cite{C1979}, \cite{G1981}, \cite{DV1996}, and \cite{MSW2002}. 

The main object of this work is the $m$-linear analogue of the  spherical maximal function, given by
\begin{equation}\label{S^m}
	S^m (f_1,\dots,f_m) (x) := \sup_{t>0}\left| \int_{\mathbb S^{mn-1}}  \prod_{j=1}^{m} f_j(x-t y^j) d\s_{mn-1}(y^1,\dots,y^m)\right|,
\end{equation}
defined originally for Schwartz functions, where $d\s$ stands for the  (normalized) surface measure of $\mathbb S^{mn-1}$.

The $m=2$ case of \eqref{S^m} is called \emph{the bi(sub)linear}  spherical maximal function, and it was first introduced by Geba, Greenleaf, Iosevich, Palsson, and Sawyer \cite{GGIPS2013}, who obtained the first bounds for it.  Later improved bounds were provided  by \cite{BGHHO2018}, \cite{GHH2018}, \cite{HHY2019}, and \cite{JL2019}. A multilinear (non-maximal) version of this operator when all input functions lie in the same space $L^{p}(\R)$ was previously studied by Oberlin \cite{O1988}. 

It was not until the work of Jeong and Lee \cite{JL2019} that the sharp open range of boundedness would be proved for the bilinear operator. Indeed, the authors proved in \cite{JL2019} that when $n\geq 2$, the bilinear maximal function is pointwise bounded by the product of the linear spherical maximal function and the Hardy-Littlewood maximal function, which implies boundedness in the optimal open set of exponents. This was generalized to the multilinear setting in \cite{D2021}. See also \cite{AP20192}, \cite{AP20191}, \cite{DG2021}, and \cite{BFOPZ22} for further developments.

The purpose of this work is to complement the results of \cite{JL2019} and \cite{D2021} in the $n=1$ case. The spherical maximal operators are generally more singular when the dimension is smaller, which is reflected by the fact that the decay of the Fourier transform of the surface measure is smaller in low dimensions. 

Our first result establishes the $L^{p_1}\times L^{p_2}\to L^p$ boundedness of the one-dimensional bilinear operator ($m=2$) in the region $p_1,p_2>2$. This range is optimal due to a counterexample of Heo, Hong, and Yang \cite{HHY2019}, a variation of which also excludes the possibility of a weak-type bound when $p_1$ or $p_2$ equals $2$.

\begin{theorem}\label{th:S2}
	Let $p_1,p_2>1$ and $p=\frac{p_1p_2}{p_1+p_2}$. Then there is a constant $C=C(p_1,p_2)<\infty$ such that
	\begin{equation}\label{th:S2strongbd}\|S^2(f,g)\|_{L^p}\leq C \|f\|_{L^{p_1}}\|g\|_{L^{p_2}}
	\end{equation}
if and only if $p_1,p_2>2$. In this case $S^2$ admits a unique bounded extension from $L^{p_1}(\R)\times L^{p_2}(\R)$ to $L^{p}(\R)$.

Moreover, for the end cases $p_1=2$ and $p_2=2$ the bilinear spherical maximal function $S^2$ fails to be weak type bounded. In particular, for any $1\leq p_1,p_2\leq \infty$, $\frac{1}{p_1}+\frac{1}{p_2}=\frac{1}{p}$, $S^2$ does not boundedly map $L^2\times L^{p_2}\to L^{p,\infty}$ nor $L^{p_1}\times L^2\to L^{p,\infty}$.
\end{theorem}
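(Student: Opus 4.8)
Write $\mathcal A_t(f,g)(x)=\int_{\mathbb S^1}f(x-ty_1)g(x-ty_2)\,d\sigma(y)$, so that $S^2(f,g)=\sup_{t>0}|\mathcal A_t(f,g)|$; parametrising $\mathbb S^1$ by $(\cos\theta,\sin\theta)$ one sees that $\mathcal A_t(f,g)(x)$ is the restriction to the diagonal of the planar circular mean of $f\otimes g$. The plan is to prove sufficiency and necessity separately, the two being linked by a single auxiliary operator: the maximal operator $M_\nu h=\sup_{t>0}|\nu_t\ast h|$ attached to the push-forward $\nu$ of $\sigma$ under $(y_1,y_2)\mapsto y_1$, i.e.\ to the measure on $(-1,1)$ with density $\tfrac1\pi(1-s^2)^{-1/2}$. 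For the sufficiency I would first settle the subcritical range $\tfrac1{p_1}+\tfrac1{p_2}<\tfrac12$ (equivalently $p>2$) elementarily: estimating $|g(x-ty_2)|\le\|g\|_{L^\infty}$ and integrating out $y_2$ gives the pointwise bound $S^2(f,g)(x)\le\|g\|_{L^\infty}M_\nu|f|(x)$, and decomposing $\nu=\sum_{k\ge0}\nu_k$ with $\nu_k$ supported where $1-|s|\sim2^{-k}$ one has $\|\nu_k\|\sim2^{-k/2}$ and $M_{\nu_k}\lesssim2^{k/2}M$ with $M$ the Hardy--Littlewood maximal operator; interpolating $\|M_{\nu_k}\|_{L^1\to L^{1,\infty}}\lesssim2^{k/2}$ against $\|M_{\nu_k}\|_{L^\infty\to L^\infty}\lesssim2^{-k/2}$ yields $\|M_{\nu_k}\|_{L^q\to L^q}\lesssim2^{k(1/q-1/2)}$, which sums for $q>2$. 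Thus $S^2$ maps $L^q\times L^\infty\to L^q$ and (symmetrically) $L^\infty\times L^q\to L^q$ for every $q>2$, and bilinear interpolation of these against the trivial $L^\infty\times L^\infty\to L^\infty$ bound covers exactly the $(p_1,p_2)$ with $p_1,p_2>2$ and $\tfrac1{p_1}+\tfrac1{p_2}<\tfrac12$.

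The remaining range $1<p\le2$ (with $p_1,p_2$ only slightly above $2$) is the hard part, and is a bilinear one-dimensional analogue of Bourgain's circular maximal theorem: here one must exploit that $\widehat{d\sigma_{\mathbb S^1}}(\eta)=\sum_{\pm}a_\pm(\eta)e^{\pm2\pi i|\eta|}$ with $a_\pm$ symbols of order $-\tfrac12$. I would Littlewood--Paley decompose $\widehat{d\sigma_{\mathbb S^1}}=\sum_{j\ge0}m_j$, $\operatorname{supp}m_j\subset\{|\eta|\sim2^j\}$, $\|m_j\|_\infty\lesssim2^{-j/2}$, bound $S^2\le\sum_j S^2_j$ by the corresponding frequency-localised maximal operators, and establish single-scale estimates $L^{q}\times L^{2}\to L^{r}$ (and symmetric) for $S^2_j$ carrying a genuine power gain $2^{-\varepsilon j}$, $\varepsilon=\varepsilon(p_1,p_2)>0$, valid on an open set of exponents reaching below $p=2$; summing in $j$ and interpolating against the non-decaying bounds from the previous paragraph then gives \eqref{th:S2strongbd} for all $p_1,p_2>2$. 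Each fixed-time piece is analysed by passing to the output frequency $\zeta=\xi_1+\xi_2$, using Plancherel, and exploiting that the phase $\xi_1\mapsto t|(\xi_1,\zeta-\xi_1)|$ is nondegenerate (curvature of $\mathbb S^1$) to run a van der Corput / $TT^*$ argument; the supremum over $t$ is then absorbed by the standard inequality $\sup_t|F_t|^2\le2(\int|F_t|^2\tfrac{dt}t)^{1/2}(\int|t\partial_tF_t|^2\tfrac{dt}t)^{1/2}$, or by a sharper local-smoothing input in $\mathbb R^2$ applied to $f\otimes g$. I expect the genuine obstacle to lie precisely here: the $|\eta|^{-1/2}$ decay is exactly cancelled by the cost of the maximal operator, so one only gains by being strictly inside $p_1,p_2>2$, and the single-scale bilinear estimates must be pushed to the very edge of that region --- which is also what forces $p_i=2$ to be critical and the range sharp.

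For the necessity, fix $1<p_1\le2$ (by symmetry this settles $p_2\le2$ as well), take $g=\mathbf 1_{[-1,1]}\in L^{p_2}$, and note that for $|x|\le\tfrac14$ and $0<t\le\tfrac34$ one has $x-ty_2\in[-1,1]$ for all $y_2\in[-1,1]$, whence $\mathcal A_t(f,g)(x)=(\nu_t\ast f)(x)$ and therefore $S^2(f,g)(x)\ge\sup_{0<t\le3/4}(\nu_t\ast f)(x)$ on $\{|x|\le\tfrac14\}$. Since $M_\nu$ is unbounded on $L^q$ for $q\le2$, one chooses $f$ to make $\|S^2(f,g)\|$ too large: for $p_1<2$ let $f=\mathbf 1_{[0,\delta]}$, for which $(\nu_{|x|}\ast f)(x)\gtrsim(\delta/|x|)^{1/2}$ on $\delta<|x|<\tfrac14$, so that (using $p<2$) $\|S^2(f,g)\|_{L^p}\gtrsim\delta^{1/2}$ while $\|f\|_{L^{p_1}}\|g\|_{L^{p_2}}\sim\delta^{1/p_1}$ and the ratio $\delta^{1/2-1/p_1}\to\infty$ as $\delta\to0$, contradicting \eqref{th:S2strongbd}. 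For $p_1=2$ take instead $f(x)=|x|^{-1/2}(\log\tfrac1{|x|})^{-1}\mathbf 1_{\{|x|<1/2\}}$, which lies in $L^2(\mathbb R)$; the extra logarithm turns $(\nu_{|x|}\ast f)(x)$ into a divergent integral for every $0<|x|<\tfrac14$, so $S^2(f,g)\equiv+\infty$ on a set of positive measure and $\|S^2(f,g)\|_{L^{p,\infty}}=\infty$. This proves both that \eqref{th:S2strongbd} forces $p_1,p_2>2$ and the asserted failure of $L^2\times L^{p_2}\to L^{p,\infty}$ and, symmetrically, of $L^{p_1}\times L^2\to L^{p,\infty}$, for every admissible value of the remaining exponent.

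Finally, the uniqueness of the extension is a routine density argument: $S^2$ is defined on Schwartz pairs, on which \eqref{th:S2strongbd} holds, and the bilinearity of $\mathcal A_t$ gives the sublinearity bounds $S^2(f_1+f_2,g)\le S^2(f_1,g)+S^2(f_2,g)$ and $|S^2(f_1,g)-S^2(f_2,g)|\le S^2(f_1-f_2,g)$ (and symmetrically in $g$), so that $(f,g)\mapsto S^2(f,g)$ sends Cauchy sequences of Schwartz functions to Cauchy sequences in $L^p$. Since Schwartz functions are dense in $L^{p_1}$ and $L^{p_2}$ (both finite in this range), this determines a unique bounded extension to $L^{p_1}(\mathbb R)\times L^{p_2}(\mathbb R)$.
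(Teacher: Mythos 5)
Your necessity/endpoint argument is essentially the paper's: the same function $f(x)=|x|^{-1/2}(\log\tfrac1{|x|})^{-1}\mathbf 1_{\{|x|<1/2\}}$ paired with an indicator, with $S^2(f,g)\equiv+\infty$ on a set of positive measure, rules out $p_1=2$ (and by symmetry $p_2=2$) even for weak-type bounds; your $f=\mathbf 1_{[0,\delta]}$ example for $p_1<2$ is the Heo--Hong--Yang computation that the paper simply cites. That half is fine. The sufficiency half, however, has a genuine gap. Your first paragraph only covers the subregion $\tfrac1{p_1}+\tfrac1{p_2}<\tfrac12$; the bulk of the claimed range (e.g.\ $p_1=p_2=3$, where $p=3/2$) is relegated to the second paragraph, and there the key step --- single-scale frequency-localized bilinear estimates with a power gain $2^{-\varepsilon j}$ ``valid on an open set of exponents reaching below $p=2$'' --- is asserted, not proved. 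That assertion is the entire content of the theorem in that range; you yourself flag it as the genuine obstacle. Moreover it is not clear the proposed route closes: the decay $|\eta|^{-1/2}$ of $\widehat{d\sigma_{\mathbb S^1}}$ is exactly borderline, local smoothing for $f\otimes g$ restricted to the diagonal does not obviously respect the mixed-norm structure $L^{p_1}\times L^{p_2}$, and no mechanism is identified that produces the required $\varepsilon>0$. As written, the proof of \eqref{th:S2strongbd} is incomplete.

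For comparison, the paper's sufficiency proof is elementary and covers all of $p_1,p_2>2$ in one stroke, with no Fourier analysis. One splits $\mathbb S^1$ into eight arcs, parametrizes the arc $\{0\le y_1\le 1/\sqrt2\le y_2\}$ by $y_1$, drops the Jacobian $(1-y_1^2)^{-1/2}\le\sqrt2$, and inserts the power weight $y_1^{\mp(1-\varepsilon)/2}$ before applying Cauchy--Schwarz:
\begin{equation*}
S^2_t(f,g)(x)\lesssim\Big(\int_0^{1/\sqrt2}f^2(x-ty_1)\,y_1^{-1+\varepsilon}\,dy_1\Big)^{1/2}\Big(\int_0^{1/\sqrt2}g^2(x-t\sqrt{1-y_1^2})\,y_1^{1-\varepsilon}\,dy_1\Big)^{1/2}.
\end{equation*}
The first factor is an $L^2$-type maximal average against an integrable decreasing weight, hence bounded on $L^{p_1}$ for $p_1>2$; in the second, the substitution $z=\sqrt{1-y_1^2}$ converts the weight $y_1^{1-\varepsilon}\,dy_1$ into $(1-z^2)^{-\varepsilon/2}z\,dz$, and a further H\"older step with $2<2q<p_2$ and $\varepsilon q'<2$ bounds it by an $L^{2q}$-maximal function, bounded on $L^{p_2}$. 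The curvature enters only through this transfer of the weight singularity from one factor to the other. If you want to salvage your write-up, the cleanest fix is to replace your second paragraph by this pointwise domination; otherwise you must actually prove the single-scale bilinear estimates you invoke.
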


\newcommand{\xx}{3.5}
\newcommand{\yy}{3.5}
\begin{figure}[h]
	\begin{center}
		\begin{tikzpicture}
			\coordinate (O) at (0,0);
			\coordinate (X) at (\xx,0);
			\coordinate (Y) at (0,\yy);
			\coordinate (T) at (\xx,\yy);
			\coordinate (HX) at (0.5*\xx,0);
			\coordinate (HY) at (0,0.5*\yy);
			\coordinate (C) at (0.5*\xx,0.5*\yy);
			\coordinate (s2) at (0.62*\xx,\yy);
			\coordinate (a1) at (\xx,0.83*\yy);
			\coordinate (a2) at (0.83*\xx,\yy);
			\coordinate (XE) at (1.2*\xx,0);
			\coordinate (YE) at (0,1.2*\yy);
			\draw[white,fill=yellow!.5] (O) -- (X) -- (T) -- (Y) -- (O) -- cycle;
			\draw[white,fill=yellow!18] (O) -- (HX) -- (C) -- (HY) -- (O) -- cycle;
			\draw[blue] (O) -- (HX);
			\draw[blue] (O) -- (HY);
			\draw[black](X) -- (T);
			\draw[black](Y) -- (T);
			\draw[black](HX) -- (X);
			\draw[black](HY) -- (Y);
			\draw[red!80] (HX) -- (C);
			\draw[red!80] (HY) -- (C);
			
			\foreach \rr in {HX,HY,C} {\filldraw[red](\rr) circle(1pt);}
			\foreach \rr in {O} {\filldraw[blue](\rr) circle(1pt);}
			\node [below = 1mm of X]  {(1,0)};
			\node [left  = 1mm of Y]  {(0,1)};
			\node [below = 1mm of HX]  {$(\frac12,0)$};
			\node [left =  1mm of HY]  {$(0,\frac12)$};
			\node [below left = 1mm of O] {$(0,0)$};
			
			\node [above  =1mm of XE]  {$1/p_1$};
			\node [above left =1mm of YE] {$1/p_2$};
			\draw[->,black,densely dotted] (X) -- (XE);
			\draw[->,black,densely dotted] (Y) -- (YE);

		\end{tikzpicture}
		\caption[Figure 1]{Range of $L^{p_1}\times L^{p_2}\to L^p$ boundedness of $S^2(f,g)$, when $n= 1$. }\label{F1}
	\end{center}
\end{figure}
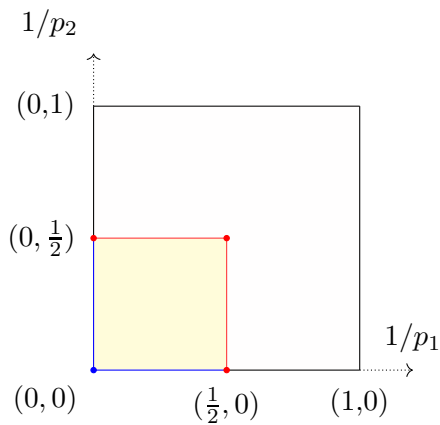

The proof of this result is based on a decomposition of the circle into sectors, in which we may safely parametrize it. We then use the curvature of the circle in our favour, in order to show a different kind of pointwise domination with respect to the $n \ge 2$ case: instead of bounding pointise by a product of the Hardy--Littlewood and spherical maximal functions, we obtain bounds with products of suitable $p-$maximal functions. In order to obtain these bounds, the curvature helps us by allowing us to insert power weights into the strategy, which effectively enable us to `transfer' decay from one maximal function to the other. 

Our second result deals with the multilinear case $m\geq 3$. Using the coarea formula (see \cite[Theorem 3.2.22]{federer}), we see that the following pointwise bound holds, for fixed $t>0$:
\begin{align*} & \left| S^m_t(f_1,\dots,f_m)(x) \right| = \left|  \int_{S^{m-1}} \prod_{k=1}^m f_k(x-ty_k) d\sigma(y) \right| \\
	&= \left| \int_{\B^{m-2}}\prod_{k=3}^m f_k(x-ty_k) \int_{r_y\S^1} f_1(x-ty_1)f_2(x-ty_2) d\s(y_1,y_2) \frac{dy_3\cdots dy_m}{r_y} \right| \\
	&\leq \left| \int_{\B^{m-2}}\prod_{k=3}^m f_k(x-ty_k) \int_{\S^1} f_1(x-tr_y y_1)f_2(x-tr_y y_2) d\s(y_1,y_2) dy_3\cdots dy_m \right| \\
	&\leq S^2(f_1,f_2)(x) \cdot M^{m-2}(f_3,\cdots,f_m)(x),
\end{align*}
where 
\[
M^m(f_1,\cdots,f_m)(x) = \sup_{t>0} \int_{\B^{m}}  \prod_{i=1}^m |f_1(x-ty_i)| dy_1\cdots d_m,
\]
is the $m$-linear Hardy-Littlewood maximal function, and $r_y=\sqrt{1-\sum_{k=3}^m y_k^2}$. From this estimate and interpolation, we obtain $L^{p_1}\times\cdots\times L^{p_m} \to L^p$ boundedness for $S^m$ in a certain range of exponents. The range of exponents thus obtained turns out to be the optimal boundedness range. 

\begin{theorem}\label{th:Sm}
	Let $n=1$, $m\geq 2$, $1\leq p_i\leq \infty$ for $i=1,\dots,m$, and $\displaystyle\frac{1}{p} = \sum_{i=1}^m\frac1{p_i}$. Then there is a constant $C<\infty$, only depending on $p_1,\dots,p_m$, such that
	\begin{equation}\label{eq:Smstrongbd}\|S^m(f_1,\dots,f_m)\|_{L^p(\R)}\leq C\prod_{i=1}^m \|f_i\|_{L^{p_i}(\R)}
	\end{equation}
	for all Schwartz functions $f_i$, $i=1,\dots,m$ if and only if all three of the following conditions hold:
	\begin{enumerate}[label=\alph*)]
		\item $\displaystyle\frac{1}{p} = \sum_{i=1}^m\frac1{p_i} < m-1$, 
		\item for every $i=1,\dots,m$, $\displaystyle{\sum_{j\neq i}\frac1{p_j}} < m-\frac32$,
		\item $\big(\frac1{p_1},\dots\frac{1}{p_m}\big) \not\in \{0,1\}^n\setminus\{(0,\dots,0)\}$.
	\end{enumerate}
	If  $\big(\frac1{p_1},\dots\frac{1}{p_m}\big) \in \{0,1\}^n\setminus\{(0,\dots,0)\}$, then we have the weak--type bound
	\begin{equation}\label{eq:Smweakbd}\|S^m(f_1,\dots,f_m)\|_{L^{p,\infty}(\R)}\leq C\prod_{i=1}^m \|f_i\|_{L^{p_i}(\R)}
	\end{equation}
	for some constant $C=C(p_1,\dots,p_m)$ if and only if (a) and (b) both hold.
	
	Finally, if for some $i=1,\dots,m$ we have $\displaystyle{\sum_{j\neq i}\frac1{p_j}} = m-\frac32$, then the bound in \cref{eq:Smstrongbd} cannot hold for any finite constant $C$.
\end{theorem}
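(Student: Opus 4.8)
The plan is to establish separately the four assertions in the statement---sufficiency of (a)--(c) for the strong bound \cref{eq:Smstrongbd}, sufficiency of (a)--(b) for the weak bound \cref{eq:Smweakbd} at the cube vertices, necessity of (a)--(c), and the endpoint failure at $\sum_{j\neq i}1/p_j=m-\tfrac32$---handling the positive halves by induction on $m$ together with multilinear real interpolation and the negative halves by explicit test functions.

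For the positive direction the base case $m=2$ is exactly \cref{th:S2}, whose sharp range $p_1,p_2>2$ is precisely the region where (a), (b), (c) all hold when $m=2$, and which already contains the weak-type failure at $p_1=2$ or $p_2=2$. For $m\ge3$ the only pointwise bound I would use is the one displayed just before the statement, written symmetrically as $S^m(f_1,\dots,f_m)\lesssim S^2(f_a,f_b)\prod_{k\neq a,b}Mf_k$ for every pair $\{a,b\}$, $M$ being the Hardy--Littlewood maximal function. Combined with \cref{th:S2} and the mapping properties of $M$ ($L^p\to L^p$ for $p>1$, $L^1\to L^{1,\infty}$, $Mf\le\|f\|_\infty$), this already gives a strong estimate whenever some pair of the $1/p_i$ is below $\tfrac12$ and no $1/p_i$ equals $1$, a weak estimate whenever some pair is below $\tfrac12$, and---on iterating---a weak estimate at each cube vertex with at most $m-2$ ones. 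I would then feed this finite family of weak and restricted-weak estimates into multilinear Marcinkiewicz interpolation; the combinatorial core is to verify that every point of the region $\Omega$ defined by (a), (b), (c) lies in the relative interior of the convex hull of the available endpoints---for instance a point with all coordinates $\ge\tfrac12$ but $\sum1/p_i<m-1$ is an average of points each with exactly two coordinates just below $\tfrac12$, and a point on a face $\{1/p_i=1\}$ is surrounded within that face by points where some pair works (on a face $\{1/p_i=0\}$ one may instead use directly that $S^m(f_1,\dots,f_m)\lesssim\|f_i\|_\infty\,S^{m-1}(\text{remaining})$, from pushing $\sigma_{S^{m-1}}$ onto the other $m-1$ coordinates, and apply induction).

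For necessity I would use three families of test functions, all with a parameter $\delta\to0$. For (a) and (c): take $f_i=\mathbf 1_{[0,\delta]}$ for the $L^p$-indices, and $\mathbf 1_{[-R,R]}$ for the $L^\infty$-indices at a cube vertex; choosing $t\approx|x|\sqrt m$ pins the relevant coordinates of $y\in S^{m-1}$ to a cube of side $\sim\delta/|x|$, whence $S^m(f_1,\dots,f_m)(x)\gtrsim(\delta/|x|)^{\kappa}$ on $|x|\in(\delta,1)$ with $\kappa=m-1$ (all inputs pinned, against the codimension-one sphere) for (a) and $\kappa=k\le m-2$ (only the $k$ singular inputs pinned, against the bounded projected density) for (c); since $\kappa p=1$ on the critical hyperplane $\sum1/p_i=m-1$, resp.\ at the cube vertex, the integral $\int(\delta/|x|)^{\kappa p}\,d|x|$ diverges logarithmically and destroys the strong bound, while the weak bound survives when $\kappa\le m-2$. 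For (b) and the endpoint clause---the genuinely one-dimensional input---I would adapt the Heo--Hong--Yang construction \cite{HHY2019}: with $f_j=\mathbf 1_{[0,\delta]}$ for $j\neq i$ and a wide $f_i$, the projection of $\sigma_{S^{m-1}}$ onto the coordinates $\neq i$ has density $(1-|y'|^2)^{-1/2}$, which over the short cube forced by the bumps is of size $\sim\delta^{-1/2}$, so $S^m(f_1,\dots,f_m)\gtrsim\delta^{-1/2}\cdot\delta^{m-1}=\delta^{m-3/2}$ on a large set; this forces $\sum_{j\neq i}1/p_j\le m-\tfrac32$, and a multi-scale logarithmic refinement sharpens it to failure of the strong bound exactly when $\sum_{j\neq i}1/p_j=m-\tfrac32$.

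I expect the endpoint construction for (b) to be the real obstacle. The weak Fourier decay of $d\sigma$ on $S^1$---equivalently the stationarity of $\cos\theta$ along the circle, so that a bump of width $\delta$ in one input constrains the angle only to an interval of length $\sim\sqrt\delta$---is precisely what produces the exponent $m-\tfrac32$ rather than $m-1$, and extracting the endpoint requires a careful logarithmic sharpening rather than a clean Knapp example. The secondary difficulty is the interpolation bookkeeping on the positive side, namely checking that the convex hull of the finitely many endpoints is exactly $\Omega$ and that the faces $1/p_i\in\{0,1\}$ are handled legitimately; this dovetails with the last clause of the theorem, since the face $\sum_{j\neq i}1/p_j=m-\tfrac32$ carries no usable endpoint estimate (the weight $(1-|y'|^2)^{-1/2}$ just fails to be square-integrable) and can only be approached strictly from inside.
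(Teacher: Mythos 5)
Your positive direction is essentially the paper's: the pointwise domination $S^m(\vec f)\lesssim S^2(f_a,f_b)\prod_{k\neq a,b}Mf_k$ fed into multilinear Marcinkiewicz interpolation with \cref{th:S2} as the base case, and your reduction $S^m\lesssim \|f_i\|_\infty S^{m-1}$ on the faces $\{1/p_i=0\}$ is a legitimate supplement (polar coordinates in the remaining variables give exactly this). Your Knapp-type examples for the closed forms of (a), (b), (c) are also the paper's (one small fix: with $f_i=\mathbf 1_{[0,\delta]}$ and $t=x\sqrt m$ the constraint cube touches the sphere only at a corner, so the cap has measure zero; use $\chi_{[-\delta,\delta]}$ or recenter $t$, as the paper does). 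Where you genuinely diverge is the failure of the strong bound on the critical hyperplane $\sum_i 1/p_i=m-1$: you keep the single-scale bumps but integrate $(\delta/|x|)^{(m-1)p}$ over $\delta<|x|<1$ to extract a $\log(1/\delta)$ divergence against $\prod\|f_i\|_{p_i}^p\approx\delta$. That is a valid and arguably cleaner argument than the paper's, which instead builds the sharp non-examples $f_i=|x|^{-1/p_i}(-\log|x|)^{-2/p_i}\chi_{[-1/2,1/2]}$ and runs them through a comparison lemma (Lemma \ref{lm:Calc}); your route covers the whole hyperplane, the paper's produces explicit functions whose maximal function is infinite or fails to lie in $L^p$.

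The genuine gap is the final clause, the failure of \cref{eq:Smstrongbd} on $\mathcal H_i$, i.e.\ when $\sum_{j\neq i}1/p_j=m-\frac32$ with $1/p_i>0$. You correctly identify this as the real obstacle but only assert that ``a multi-scale logarithmic refinement'' of the tangency example will do it; that is not a proof, and the single-scale trick that rescued the hyperplane $\sum 1/p_i=m-1$ does not transfer: with $f_j=\chi_{[-\delta,\delta]}$ for $j\neq i$ and $f_i$ bounded, one gets $S^m(\vec f)(x)\gtrsim(\delta/|x|)^{m-3/2}$, and $(m-\frac32)p=1$ only when $1/p_i=0$, so the logarithmic divergence appears only at the single vertex of $\mathcal H_i$ with $p_i=\infty$. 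To cover the rest of $\mathcal H_i$ one must make $f_i$ itself a sharply non-$L^{p_i}$-borderline function spread over large $|x|$ and couple its decay to the tangential gain; the paper does this with $f_j=|x|^{-1/p_j}(-\log|x|)^{-2/p_j}\chi_{[-1/2,1/2]}$ for $j\neq i$, $f_i=|x|^{-1/p_i}(\log|x|)^{-2/p_i}\chi_{\R\setminus[-2,2]}$, the Jacobian lower bound $(1-\sum_{j\neq i}y_j^2)^{-1/2}\gtrsim(x/|\tilde v|)^{1/2}$ on the tangent cap, and Lemma \ref{lm:Calc} to replace each $|v_j|$ by $|\tilde v|$, yielding $S^m(\vec f)(x)\gtrsim x^{-1/p}(\log x)^{-2/p_i}\notin L^p$ because $2p/p_i<1$. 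Without this (or an equivalent) construction your argument establishes the open-range characterization but not the endpoint statement of \cref{th:Sm}.
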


As an example we graph the region of boundedness for the trilinear spherical maximal function.

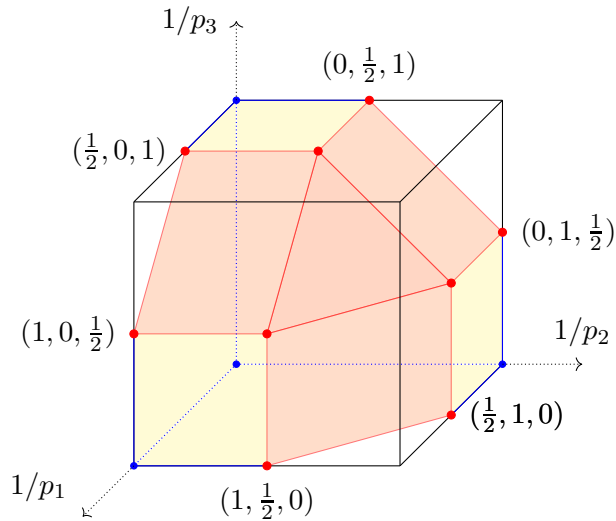
\begin{figure}[h]
	\newcommand{\Depth}{3.5}
	\newcommand{\Height}{3.5}
	\newcommand{\Width}{3.5}
	\begin{center}
		\begin{tikzpicture}
			\coordinate (O) at (0,0,0);
			\coordinate (A) at (0,0,\Height);
			\coordinate (B) at (\Depth,0,0);
			\coordinate (C) at (0,\Width,0);
			\coordinate (D) at (0,\Width,\Height);
			\coordinate (E) at (\Depth,0,\Height);
			\coordinate (F) at (\Depth,\Width,0);
			\coordinate (G) at (\Depth,\Width,\Height);
			\coordinate (hx) at (0,0,0.5*\Height);
			\coordinate (hy) at (0.5*\Depth,0,0);
			\coordinate (hz) at (0,0.5*\Width,0);
			\coordinate (hxY) at (\Depth,0,0.5*\Height);
			\coordinate (hyX) at (0.5*\Depth,0,\Height);
			\coordinate (hzY) at (\Depth,0.5*\Width,0);
			\coordinate (hxZ) at (0,\Width,0.5*\Height);
			\coordinate (hyZ) at (0.5*\Depth,\Width,0);
			\coordinate (hzX) at (0,0.5*\Width,\Height);
			\coordinate (hxhyZ) at (0.5*\Depth,\Width,0.5*\Height);	
			\coordinate (hyhzX) at (0.5*\Depth,0.5*\Width,\Height);	
			\coordinate (hzhxY) at (\Depth,0.5*\Width,0.5*\Height);	
			
			\coordinate (X) at (0,0,1.5*\Height);
			\coordinate (Y) at (1.3*\Depth,0,0);
			\coordinate (Z) at (0,1.3*\Width,0);

			\draw[white,fill=yellow!35] (C) -- (hxZ) -- (hxhyZ) -- (hyZ) -- cycle;
			\draw[white,fill=yellow!35] (A) -- (hyX) -- (hyhzX) -- (hzX) -- cycle;
			\draw[white,fill=yellow!35] (B) -- (hzY) -- (hzhxY) -- (hxY) -- cycle;
			\draw[white,fill=yellow!20] (O) -- (A) -- (hyX) -- (hxY) -- (B) -- cycle;
			\draw[white,fill=yellow!20] (O) -- (B) -- (hzY) -- (hyZ) -- (C) -- cycle;
			\draw[white,fill=yellow!20] (O) -- (A) -- (hzX) -- (hxZ) -- (C) -- cycle;
			\draw[red, fill=red!30,opacity=0.5] (hyhzX) -- (hzhxY) -- (hxhyZ) -- cycle; 
			\draw[red, fill=red!25,opacity=0.5] (hyhzX) -- (hzhxY) -- (hxY)-- (hyX) -- cycle; 
			\draw[red, fill=red!25,opacity=0.5] (hyhzX) -- (hxhyZ) -- (hxZ)-- (hzX) -- cycle; 
			\draw[red, fill=red!25,opacity=0.5] (hzhxY) -- (hxhyZ) -- (hyZ)-- (hzY) -- cycle; 
			
			
			\draw[black] (G) -- (D);	
			\draw[black] (G) -- (E);	
			\draw[black] (G) -- (F);	
			\draw[black] (A) -- (D);	
			\draw[black] (C) -- (D);	
			\draw[black] (F) -- (C);
			\draw[black] (F) -- (B);
			\draw[black] (E) -- (A);
			\draw[black] (E) -- (B);

			\draw[blue,densely dotted] (O) -- (A);
			\draw[blue,densely dotted] (O) -- (B);
			\draw[blue,densely dotted] (O) -- (C);
			\draw[blue] (A) -- (hyX);
			\draw[blue] (A) -- (hzX);
			\draw[blue] (B) -- (hxY);
			\draw[blue] (B) -- (hzY);
			\draw[blue] (C) -- (hxZ);
			\draw[blue] (C) -- (hyZ);


			\foreach \rr in {hxY, hxZ, hyX, hyZ, hzX, hzY, hxhyZ, hyhzX, hzhxY} {\filldraw[red](\rr) circle(1.5pt);}
			\foreach \rr in {O, A, B, C} {\filldraw[blue](\rr) circle(1.2pt);}
			\node [below =1mm of hyX]  {$(1,\frac12,0)$};
			\node [left = 1mm of hzX]  {$(1,0,\frac12)$};
			\node [right =1mm of hxY]  {$(\frac12,1,0)$};
			\node [right =1mm of hzY]  {$(0,1,\frac12)$};
			\node [above =1mm of hyZ]  {$(0,\frac12,1)$};
			\node [left =1mm of hxZ]  {$(\frac12,0,1)$};
			\node [right =1mm of hxY]  {$(\frac12,1,0)$};
			
			
			\node [above left =1mm of X]  {$1/p_1$};
			\node [above=1mm of Y] {$1/p_2$};
			\node [left=1mm of Z] {$1/p_3$};
			\draw[->,black,densely dotted] (A) -- (X);
			\draw[->,black,densely dotted] (B) -- (Y);
			\draw[->,black,densely dotted] (C) -- (Z);

		\end{tikzpicture}
		\caption[Figure 1]{The $L^{p_1}\times L^{p_2}\times L^{p_3}\to L^p$ boundedness region of the trilinear spherical maximal operator ($n = 1 $). }\label{F1}
	\end{center}
\end{figure}

In order to prove the necessity of the conditions on the exponents in Theorem \ref{th:Sm}, we shall employ two different kinds of counterexamples: the first is where all functions involved are similarly concentrated around the origin, which gives us condition (a), and the second in which all but one function - at entry $i$ - are similarly concentrated around the origin, whereas $f_i$ is spread out; this gives us condition (b). A modification of such examples in the spirit of Stein's original counterexample allows us to obtain condition (c) and the asserted lack of endpoint bounds.

Finally, let us mention for a brief moment the boundary case: for shortness of notation, define, for $i=1,\dots,m$, the sets $\mathcal H , \mathcal H_i$ as 
\[\mathcal H := [0,1]^m \cap \bigg\{ \sum_{j=1}^m x_j = m-1\bigg\} \bigcap\left[ \bigcap_{i=1}^m\Big\{\sum_{j\neq i} x_j \leq m-\frac32\Big\}\right] \]
and
\[\mathcal{H}_i = [0,1]^m \cap \bigg\{ \sum_{j\neq i} x_j = m-\frac32\bigg\}\bigcap \Big\{\sum_{j=1}^m x_j \leq m-1\Big\}.\]
In the diagram above, the set $\mathcal{H}$ denotes the middle triangle in red, whereas each of the $\mathcal{H}_i,i=1,2,3,$ denote one of the red rectangles. In spite of Theorem \ref{th:Sm} and the counterexamples it provides, the question of weak-type boundedness of $S^m$ when $(\frac1{p_1},\dots,\frac1{p_m})$ belongs in $\mathcal H$ or $\mathcal H_i$  remains open, as our counterexamples lie (sharply) in the corresponding Lebesgue spaces. 

\section*{Boundedness of the Multilinear Spherical Maximal function}

Let $m\in \N$ be the index of multilinearity, and $t>0$. Define for Schwartz functions $f_1,\dots,f_m$ on the real line 
\[
S^m_t(f_1,\dots,f_m)(x) =  \int_{\S^{m-1}} \prod_{i=1}^m f(x-ty_i) d\sigma(y),
\]
where $\S^{m-1}$ is the unit sphere in $\R^{m}$, $y=(y_1,\dots,y_m)\in \S^{mn-1}$, $y_i\in\R$ for $i=1,\dots,m$, and $d\sigma$ is the (normalized) surface measure on $\S^{m-1}$.
The multilinear spherical maximal operator is defined by
\[S^m(f_1,\dots,f_m)(x) = \sup_{t>0} S^m_t(|f_1|,\dots,|f_m|)(x) = \sup_{t>0} \int_{\S^{m-1}}  \prod_{i=1}^m |f_i(x-ty_i)| d\sigma(y).\]

\begin{proof}[Proof of \Cref{th:S2}, boundedness part]
By sublinearity, we can assume without a loss of generality that $f,g \geq 0$. Fix then two indices $p_1,p_2 > 2.$ 

 Decomposing the integral over $\S^1$ as the sum of the integrals over eight parts of the circle, we see that it is enough to deal with the integral over the set
\[\left\{(y_1,y_2)\in \S^1\,:\, 0\leq y_1 \leq \dfrac1{\sqrt{2}}\leq y_2\leq 1\right\},\]
as the treatment over the other sets is essentially equivalent. We then explicitly parametrize the circle over this arc, to obtain 
\begin{align*} S^2_{t}(f,g)(x) &= \int_{0}^{1/\sqrt{2}} f(x-ty_1)g(x-t\sqrt{1-y_1^2}) \frac{dy_1}{\sqrt{1-y_1^2}}\\
&\leq \int_{0}^{1/\sqrt{2}} f(x-ty_1)g(x-t\sqrt{1-y_1^2})dy_1\\
&=\int_{0}^{1/\sqrt{2}} f(x-ty_1)y_1^{-\frac{1-\varepsilon}2}g(x-t\sqrt{1-y_1^2}) y_1^{\frac{1-\varepsilon}2}dy_1\\
&\leq \left(\int_{0}^{1/\sqrt{2}} f^2(x-ty_1) y_1^{-1+\varepsilon} dy_1\right)^{1/2}\left( \int_{0}^{1/\sqrt{2}} g^2(x-t\sqrt{1-y_1^2}) y_1^{1-\varepsilon} dy_1	\right)^{1/2},
\end{align*}
where $\e>0$ small, to be chosen later. Since $y^{-1+\varepsilon}\chi_{0\leq y \leq {1/\sqrt{2}}}\in L^1$ and is decreasing for any $\varepsilon > 0,$ the maximal function
\[f\mapsto \sup_{t>0} \left(\int_{0}^{1/\sqrt{2}} f^2(x-ty_1) y_1^{-1+\varepsilon} dy_1\right)^{1/2}\]
is bounded on $L^{p_1}$, since $p_1>2$. For the second term, we change variables by setting $z=\sqrt{1-y_1^2}$ to get 
\begin{align*} &\left( \int_{0}^{1/\sqrt{2}} g^2(x-t\sqrt{1-y_1^2}) y_1^{1-\varepsilon} dy_1	\right)^{1/2}= \left( \int_{1/\sqrt{2}}^{1} g^2(x-tz) \left(\sqrt{1-z^2}\right)^{-\varepsilon} zdz	\right)^{1/2}\\
\leq &\left( \int_{1/\sqrt{2}}^{1} g^2(x-tz) \left(\sqrt{1-z^2}\right)^{-\varepsilon} dz	\right)^{1/2}\\
\leq & \left( \int_{1/\sqrt{2}}^{1} g^{2q}(x-tz) dz\right)^{1/2q}\left( \int_{1/\sqrt{2}}^{1} \dfrac1{\sqrt{1-z^2}^{\varepsilon q'} }dz	\right)^{1/2q'}
\end{align*}
for any $1\leq q,q'\leq \infty$ with $\frac{1}{q}+\frac1{q'}=1$. We choose $q$ sufficiently close to $1$ so that $2<2q<p_2$ and then we choose $\e$ to be sufficiently small so that $\e q'<2$. In this way the second term in the above product is finite, and the maximal function
\[g\mapsto \sup_{t>0} \left( \int_{1/\sqrt{2}}^{1} g^{2q}(x-tz) dz\right)^{1/2q}\]
is bounded on $L^{p_2}$. Finally, taking supremum over $t>0$ on both sides, we have
\begin{align*}  &S^2(f,g)(x) \\
&\lesssim \left(\sup_{t>0} \left(\int_{0}^{1/\sqrt{2}} f^2(x-ty_1) y_1^{-1+\varepsilon} dy_1\right)^{1/2}\right) \left(\sup_{t>0}\left( \int_{1/\sqrt{2}}^{1} g^{2q}(x-tz) dz\right)^{1/2q}\right).
\end{align*}
Taking $L^p$ norms on both sides and using H\"older's inequality and the bounds discussed above completes the proof of \cref{th:S2strongbd}.
\end{proof}

\begin{proof}[Proof of Theorem \ref{th:Sm}, boundedness part]

Again, by sublinearity, it is enough to assume that $f_i\geq 0$ for all $i=1,\dots,m$. Let $\left(\frac{1}{p_1},\dots,\frac{1}{p_m}\right)\in [0,1]^m$ be set of exponents such that $(a)$, $(b)$, and $(c)$ are satisfied. If there at least 2 indices $i_1$,$i_2$ such that $p_{i_1},p_{i_2}>2$ then the bound in \cref{eq:Smstrongbd} follows from the pointwise estimate 
\begin{equation}\label{eq:Smdec}
S^m(\vec{f})(x) \lesssim  S^2(f_{i_1},f_{i_2})(x) M^{m-2}(\pi_{i_1,i_2} (\vec{f}))(x) 
\end{equation}
where 
\[\pi_{i_1,i_2} \vec{f} (x) = (f_1,\dots,f_{i_1 -1},f_{i_1 +1},\dots,f_{i_2 -1},f_{i_2+1}\dots,f_m)(x) \]
is the projection of the vector $\vec{f}(x) $ from $\R^m$ to $\R^{m-2}$.
If $p_i\leq2$ for all $i=1,\dots,m$, then, by $(a)$, $\left(\frac{1}{p_1},\dots,\frac{1}{p_m}\right)$ belongs to the convex hull of $\mathcal H \cup \{0\}$. Similarly, if $p_i>2$ and $p_j\leq 2$ for all $j\neq i$, then $\left(\frac{1}{p_1},\dots,\frac{1}{p_m}\right)$ belongs in the convex hull of $\mathcal{H}_i\cup \{te_i, t \in (0,1/2)\}$. Using \Cref{th:S2} and multilinear interpolation (\cite{GLLZ2012}; see also \cite[Section 7.2]{GModern}), we obtain the strong and weak bounds in \cref{eq:Smstrongbd} and \cref{eq:Smweakbd}.
\end{proof}

\section*{Counterexamples}

\begin{proof}[Proof of Theorem \ref{th:S2}, counterexample part] 
In \cite{HHY2019} the authors showed that if the strong type bound \cref{th:S2strongbd} holds, then $p_1,p_2\geq 2$. Moreover, in \cite{BGHHO2018} it was shown that $p=\frac{p_1p_2}{p_1+p_2}>1$. A combination of the two examples shows that $p_1,p_2>2$ necessarily holds even for the weak type bound. We thus focus on this latter observation. 

Let $g=\chi_{[-10,10]}$ and $f(x)=|x|^{-1/2}\left(-\log(|x|)\right)^{-1}\chi_{[-1/2,1/2]}(x)$. Then $f\in L^2(\R)$ and $g\in L^{p_2}(\R)$ for any $p_2\geq 1$. For any $1/4\leq x\leq 1/2$ we choose $t=x$ in the definition of $S^2(f,g)$ to estimate it from below by
\begin{align*} S^2(f,g) (x) \geq& \int_{0}^{1} |x-xy|^{-1/2}\left(-\log(|x-xy|)\right)^{-1} \frac{dy}{\sqrt{1-y^2}}\\
	&\geq \frac{\sqrt{x}}{\sqrt{2}} \int_{0}^{1}(x-xy)^{-1} \left(-\log(x-xy)\right)^{-1}dy\\
	&\geq \frac{1}{\sqrt{2x}} \int_{0}^{x} u^{-1} \left(-\log(u)\right)^{-1}du=+\infty,
\end{align*}
where we changed variables $u = x - xy$ in the passage from the second to the third line. Therefore $S^2(f,g) (x) = +\infty$ on a set of positive measure and the result follows for the $p_1 =2$ case. Since the case $p_2=2$ is symmetric, this finishes our proof.
\end{proof}

\begin{proof}[Proof of Theorem \ref{th:Sm}, counterexample part] 
	We start by showing that the open set of exponents in \Cref{th:Sm} is optimal. 
	
\noindent\textit{Necessity of condition (a):} If $f_1=\cdots=f_m= \chi_{[-\delta,\delta]}$, then for $1/2\leq x\leq 1$ and $t=x\sqrt{m}$, a simple estimation of volume shows that
\[\hfill S^m(f_1,\dots,f_m)(x) \gtrsim \delta^{m-1} \qquad \text{for } \frac12\leq x\leq 1, \]
and thus, if $S^m$ is bounded from $L^{p_1} \times \cdots \times L^{p_m} \to L^p,$ we should have 
\[\delta^{m-1}\lesssim \|S^m(f_1,\dots,f_m)\|_{L^p}\lesssim \prod_{i=1}^m \|f_i\|_{L^{p_i}} \leq \delta^{\sum_{i=1}^m \frac{1}{p_i}},\]
and therefore $\sum\limits_{i=1}^m \frac{1}{p_i} \leq m - 1$. \\

\noindent\textit{Necessity of condition (b):} We set $f_1=\chi_{[-10\sqrt{m},10\sqrt{m}]}$ and $f_2=\cdots=f_m=\chi_{[-\delta,\delta]}$. For $1/2\leq x\leq 1$ we choose $t=x\sqrt{m-1}$ to estimate $S^m$ from below. This yields that 
\[\hfill S^m(f_1,\dots,f_m)(x) \gtrsim \delta^{m-3/2} \qquad \text{for } \frac12\leq x\leq 1, \]
by the tangency of this sphere to the $x_1$--axis. Thus, if $S^m$ is bounded for these exponents,
\[\delta^{m-\frac32}\lesssim \|S^m(f_1,\dots,f_m)\|_{L^p}\lesssim \prod_{i=1}^m \|f_i\|_{L^{p_i}} \lesssim_m \delta^{\sum_{i=2}^m \frac{1}{p_i}},\]
and therefore $\sum\limits_{i=2}^m \frac{1}{p_i} \leq m - \frac{3}{2}$.\\

With the necessity of conditions (a) and (b) in the statement of Theorem \ref{th:Sm} proved, we move on to proving that the strong-type bounds \emph{fail} in the boundary sets $\mathcal{H}$ and $\mathcal{H}_i, i=1,\dots,m.$ 

First of all, we note the following calculus fact: 

\begin{lemma} \label{lm:Calc} Let $r_1,r_2>0$, $t,s< e^{-\frac{r_2}{r_1}}$ and $t\leq C s$ for some $C\geq 1$. Then there exists an absolute constant $C'$ (depending only on $C,$ $r_1,$ $r_2$) such that
	\begin{equation}\label{l2} s^{-r_1}\left(\log \frac{1}{s}\right)^{-r_2}\leq C' t^{-r_1}\left(\log \frac{1}{t}\right)^{-r_2}.
	\end{equation}
\end{lemma}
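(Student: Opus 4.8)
The plan is to prove Lemma \ref{lm:Calc} by comparing the two sides through the behavior of the auxiliary function $\phi(u) := u^{-r_1}(\log(1/u))^{-r_2}$ on the interval $(0, e^{-r_2/r_1})$. First I would check that this upper bound on $u$ is exactly the threshold below which $\phi$ is monotone decreasing: differentiating $\log \phi(u) = -r_1 \log u - r_2 \log\log(1/u)$ gives $(\log\phi)'(u) = -\frac{r_1}{u} + \frac{r_2}{u \log(1/u)} = \frac{1}{u}\left(\frac{r_2}{\log(1/u)} - r_1\right)$, which is negative precisely when $\log(1/u) > r_2/r_1$, i.e. when $u < e^{-r_2/r_1}$. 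So $\phi$ is strictly decreasing on the relevant range.

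Given monotonicity, the case $t \leq s$ is immediate with $C' = 1$, since then $\phi(s) \leq \phi(t)$. The remaining case is $s < t \leq Cs$, and here the goal is to show $\phi(s) \leq C' \phi(t)$ with $C'$ depending only on $C, r_1, r_2$. I would write $\frac{\phi(s)}{\phi(t)} = \left(\frac{t}{s}\right)^{r_1} \left(\frac{\log(1/t)}{\log(1/s)}\right)^{r_2}$. The first factor is at most $C^{r_1}$ by hypothesis. For the second factor, since $s < t$ we have $\log(1/t) < \log(1/s)$, so the ratio $\frac{\log(1/t)}{\log(1/s)}$ is already less than $1$; combined with $r_2 > 0$, the second factor is at most $1$. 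Hence $\frac{\phi(s)}{\phi(t)} \leq C^{r_1}$, and we may take $C' = \max(1, C^{r_1}) = C^{r_1}$ since $C \geq 1$.

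I do not anticipate a genuine obstacle here — the lemma is a soft monotonicity-plus-doubling statement, and the only point requiring a little care is verifying that the stated constraint $t, s < e^{-r_2/r_1}$ is precisely what makes $\phi$ decreasing (rather than merely eventually decreasing), so that the $t \leq s$ subcase is handled cleanly and so that in the $s < t$ subcase we know $\log(1/t) > r_2/r_1 > 0$ and in particular $\log(1/t)$ is positive, making the quotient of logarithms well-defined and the bound by $1$ legitimate. One should also note that the constant can in fact be taken to be just $C^{r_1}$, independent of $r_2$, which is slightly stronger than what the statement claims; I would state it with the dependence as written for consistency with later usage.
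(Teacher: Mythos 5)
Your proof is correct and complete. The paper itself states Lemma~\ref{lm:Calc} as a ``calculus fact'' without giving any proof, so there is nothing to compare against; your argument --- monotonicity of $\phi(u)=u^{-r_1}(\log(1/u))^{-r_2}$ on $(0,e^{-r_2/r_1})$ for the case $t\leq s$, plus the factorization $\phi(s)/\phi(t)=(t/s)^{r_1}\bigl(\log(1/t)/\log(1/s)\bigr)^{r_2}\leq C^{r_1}$ for the case $s<t\leq Cs$ --- is the natural one and correctly identifies why the hypothesis $t,s<e^{-r_2/r_1}$ is exactly the monotonicity threshold. Your observation that one may take $C'=C^{r_1}$ is a valid (slight) sharpening of the stated conclusion.
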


 We then let $f_i= |x|^{-1/p_i}\left(-\log|x|\right)^{-2/p_i}\chi_{[-1/2,1/2]}$ for $i=1,\dots,m$, and note that $f_i\in L^{p_i}(\R)$. For large $x>0$, we choose $t=x\sqrt{m}$ to estimate $S^m(\vec{f})(x)$ from below by focusing on the region 
\[V_m(x):= \left\{(y_1,\dots,y_m)\in \S^{m-1} \, : \, \left|\frac1{\sqrt{m}} - y_1\right|, \cdots, \left|\frac1{\sqrt{m}} - y_{m-1}\right|< \frac1{300 m \cdot x\sqrt{m}}\right\}. \]
This yields the lower bound
\begin{align*} S^m(f_1,\dots,f_m) (x) &\geq \int_{V_m(x)} \prod_{i=1}^m f_i(x-\sqrt{m}xy_i)d\s(\vec{y})\\
	&\geq \int_{V_m(x)}\prod_{i=1}^m|x-\sqrt{m}xy_i|^{-1/p_i}\left(-\log(|x-\sqrt{m}xy_i|)\right)^{-2/p_i}d\s(\vec{y}). 
\end{align*}
Notice now that, for $\vec{y} \in V^{+}_m(x) = \{\vec{y}\in V_m(x)\, :\, y_m > 0 \},$ we have
\begin{align*}
\left| \frac{1}{\sqrt{m}} - y_m \right| & = \frac{1}{|\frac1{\sqrt{m}} + y_m|} \left|\frac{1}{m} - y_m^2 \right|  \leq \sqrt{m} \left| \frac{1}{m} - \left(1-\sum_{j \le {m-1}} y_j^2 \right) \right| \cr 
 & \leq  3 \sum_{j \le m-1} \left| y_j - \frac{1}{\sqrt{m}}\right|< 3(m-1) \frac{1}{300 m \cdot x\sqrt{m}}. \cr
&< \frac{1}{100 x  \sqrt{m}} .
\end{align*}
 This in turn implies that the new variables $u_i := x - x\sqrt{m} y_i, i=1,\dots,m,$ satisfy $(\sum_{i\le m-1} |u_i|^2)^{1/2}, |u_m| < e^{-2}$, which allows us to use \Cref{lm:Calc} since $\max_{i,j} \frac{p_i}{p_j} = 2$ for indices in $\mathcal H$.

With this in mind, we locally parametrize $V^{+}_m(x)$ in terms of the first $(m-1)$ coordinates and use the aforementioned change of variables $\vec{y} \mapsto \tilde{u}$ in the lower bound above, noticing we are in a position to use Lemma \ref{lm:Calc}, between $|u_i|$ and $|\tilde{u}|,$ where $\tilde{u} := (u_1,\dots,u_{m-1}).$ This implies, thus,
\begin{align*}
	S^m(f_1,\dots,f_m) (x) & \ge C_m |x|^{1-m} \int_{B^{m-1}(0,\frac1{300m})} |\tilde{u}|^{-\frac1p} \left(-\log(|\tilde{u}|)\right)^{-\frac{2}{p}}d\tilde{u}\\
	&\gtrsim \begin{cases} |x|^{1-m} & \text{if } \frac1p = m-1,\\
		\infty & \text{if } \frac1p >m-1. \end{cases}
\end{align*}
This deals with the lack of strong-type bounds for the set $\mathcal{H}.$ \\

We deal with the lack of strong-type bounds in each $\mathcal{H}_i$ in a similar manner. Without loss of generality we focus on $\mathcal{H}_m.$ Let then $f_i= |x|^{-1/p_i}\left(-\log|x|\right)^{-2/p_i}\chi_{[-1/2,1/2]}$ for $i=1,\dots,m-1,$ and $f_m = |x|^{-1/p_m} \left( \log |x|\right)^{-2/p_m} \chi_{\R \setminus [-2,2]}.$ Note that $f_i\in L^{p_i}(\R)$. For large $x>0$, we choose $t=x\sqrt{m-1}$ to estimate $S^m(\vec{f})(x)$ from below by focusing on the region 
\[W_m(x):= \left\{ \vec{y}\in \S^{m-1} \, : \, \left|\frac1{\sqrt{m-1}} - y_1\right|,\cdots, \left|\frac1{\sqrt{m-1}} - y_{m-1}\right|< \frac{10^{-4}}{ m x\sqrt{m-1}}\right\}, \]
over which $|1-y_m\sqrt{m-1}| \approx 1$. Moreover, it can be seen that - by similar methods to the ones employed in the analysis of $V_m(x)$ above - for $\vec{y} \in W_m(x),$ we have $\left( 1 - \sum_{i \le m-1} y_i^2 \right)^{-1/2} \ge c_m \left( \frac{x}{|\tilde{v}|} \right)^{1/2}, $
where $c_m >0$ is a constant depending only on $m,$ and $\tilde{v} = (v_1,\dots,v_{m-1}),$ where $v_i = x - x \sqrt{m-1}y_i.$ Parametrizing locally in terms of the first $(m-1)$ coordinates, changing variables $\vec{y} \mapsto \tilde{v}$ and using Lemma \ref{lm:Calc} again, we obtain 
\allowdisplaybreaks{
\begin{align*} S^m & (f_1,\dots,f_m) (x) \geq \int_{W_m(x)} \prod_{i=1}^{m} f_i(x-\sqrt{m-1}xy_i)d\s(\vec{y})\\
	\gtrsim &  x^{\frac32-m -\frac1{p_m}} \left(\log x\right)^{-\frac2{p_m}}\int_{B^{m-1}(0,\frac{10^{-4}}{m})} |\tilde{v}|^{- \frac12-\sum\limits_{i=1}^{m-1}\frac1{p_i} } \left(-\log(|\tilde{v}|)\right)^{-\sum\limits_{i=1}^{m-1}\frac2{p_i}}\, d\tilde{v}\\ 
	\gtrsim & \begin{cases} x^{-\frac1p} \left(\log x\right)^{-\frac2{p_m}} & \text{if } \sum\limits_{i\leq m-1}\frac1{p_i} = m-\frac32,\\
		\infty & \text{if } \sum\limits_{i\leq m-1}\frac1{p_i} >m-\frac32. \end{cases}
\end{align*}
}
Thus, when $\sum\limits_{i=1}^{m-1}\frac1{p_i} = m-\frac32$, the above calculation shows that $S^m(\vec{f})(x)\gtrsim  x^{-\frac1p} \left(\log\frac1x\right)^{-\frac2{p_m}}$ for $x$ sufficiently large, and thus $S^m(\vec{f})\not\in L^p$, since $\frac{2p}{p_m}<1$. This completes the proof of the fact that no strong-type bounds can hold in the sets $\mathcal{H}_i.$

Finally, suppose that (c) is not satisfied. The counterexample in \cite[Proposition 2]{D2021} shows that the strong-type bound in \cref{eq:Smstrongbd} cannot hold, since if, for instance, $p_1=\cdots=p_k=1$ and $p_{k+1}=\cdots=p_m=\infty$, we may take $f_1=\cdots=f_k=\chi_{(-1,1)}$ and $f_{k+1}=\cdots=f_m \equiv 1$. Then for large $x>0$ and $t=x\sqrt{k}$
\begin{align*}
S^m(f_1,\dots,f_m)(x)  &\geq \int_{B^{k}(0,1)} \prod_{i=1}^k |f_i(x-x\sqrt{k}y_i)| dy_1\dots dy_k\\
&\gtrsim |x|^{-k}
\end{align*}
pointwise, which shows that \cref{eq:Smstrongbd} cannot hold in this case.
\end{proof}

\bibliographystyle{amsplain}

\Addresses

\end{document}